\documentclass[12pt]{article}
\usepackage[latin1]{inputenc}
% Copyable pdf
%\usepackage{cmap} % Problems on arXiv
% Font
\usepackage{lmodern}

\usepackage{amssymb, amsmath, amsthm}
\usepackage[a4paper,top=25mm,bottom=25mm,left=25mm,right=25mm]{geometry}
\usepackage{etex}

\usepackage{authblk} % for headings
\usepackage{pifont}
\usepackage{graphicx}
\usepackage[usenames,dvipsnames,svgnames,table]{xcolor}
\usepackage[figuresright]{rotating}
\usepackage{xtab} % tackle the long tables
\usepackage{longtable} % tackle the long tables
\usepackage{footnote}
\usepackage[stable]{footmisc}
\usepackage{chngpage} % allows for temporary adjustment of side margins
\usepackage{pdflscape} % landscape environment

\usepackage{pgfplots}
%\pgfplotsset{compat=1.11} % Problems on arXiv
\usepackage{setspace}

\makesavenoteenv{tabular}
\usepackage{tabularx}
\usepackage{booktabs}
\usepackage{multirow}
\usepackage{threeparttable}
\usepackage[referable]{threeparttablex} % footnotes in tabu
\newcolumntype{R}{>{\raggedleft\arraybackslash}X}
\newcolumntype{L}{>{\raggedright\arraybackslash}X}
\newcolumntype{C}{>{\centering\arraybackslash}X}
\newcolumntype{A}{>{\columncolor{gray!25}}C}
\newcolumntype{a}{>{\columncolor{gray!25}}c}

\usepackage{dcolumn} % alignment to decimal points
\newcolumntype{.}{D{.}{.}{-1}}

\usepackage{tikz}
\usetikzlibrary{arrows}
\usepackage[semicolon,numbers]{natbib}
\usepackage[unicode]{hyperref} % [hidelinks]
\PassOptionsToPackage{unicode}{hyperref}
\usepackage{bookmark}
\usepackage{microtype}
\usepackage[justification=centerfirst]{caption}

% Captions of subtables and subfigures
\usepackage[labelformat=simple]{subcaption}

\DeclareCaptionLabelFormat{parenthesis}{(#2)}
\captionsetup[subfigure]{labelformat=parenthesis,font+=small,list=false}
\makeatletter
\renewcommand\p@subfigure{\arabic{figure}.}
\makeatother

\DeclareCaptionLabelFormat{parenthesis}{(#2)}
\captionsetup[subtable]{labelformat=parenthesis,font+=small,list=false}
\makeatletter
\renewcommand\p@subtable{\arabic{table}.}
\makeatother

\usepackage{enumitem}

% felsorolasok behuzasa
\setlist[itemize]{leftmargin=3\parindent}
\setlist[enumerate]{leftmargin=2\parindent}

\theoremstyle{plain}

\newtheorem{corollary}{Corollary}

\newtheorem{proposition}{Proposition}
\newtheorem{theorem}{Theorem}

\theoremstyle{definition}

\newtheorem{definition}{Definition}
\newtheorem{example}{Example}

\theoremstyle{remark}
\newtheorem{notation}{Notation}
\newtheorem{remark}{Remark}

% Sakk elemzeshez kell

\def\keywords{\vspace{.5em} % Add keywords
{\textit{Keywords}:\,\relax%
}}

\def\JEL{\vspace{.5em} % Add keywords
{\textbf{JEL classification number}:\,\relax%
}}

\def\AMS{\vspace{.5em} % Add keywords
{\textbf{AMS classification number}:\,}}

\author{L\'aszl\'o Csat\'o\thanks{~Department of Operations Research and Actuarial Sciences, Corvinus University of Budapest (BCE) and MTA-BCE ''Lend\"ulet'' Strategic Interactions Research Group, Budapest, Hungary \\ e-mail: laszlo.csato@uni-corvinus.hu} --
Lajos R\'onyai\thanks{~Informatics Laboratory, Institute for Computer Science and Control, Hungarian Academy of Sciences (MTA SZTAKI) and Budapest University of Technology and Economics (BME), Budapest, Hungary \\ e-mail: ronyai.lajos@sztaki.mta.hu}}
\title{Incomplete Pairwise Comparison Matrices and Weighting Methods\thanks{~We are grateful to S\'andor Boz\'oki for reading the manuscript and for useful advices. \newline
The research was supported by OTKA grants K 111797 and NK 105645.}}
\date{\today}

\begin{document}

\maketitle

\begin{abstract}
A special class of preferences, given by a directed acyclic graph, is considered.
%In this case, at least one natural order of the alternatives exists.
They are represented by incomplete pairwise comparison matrices as only partial information is available: for some pairs no comparison is given in the graph. A weighting method satisfies the linear order preservation property if it always results in a ranking such that an alternative directly preferred to another does not have a lower rank.
We study whether two procedures, the Eigenvector Method and the Logarithmic Least Squares Method meet this axiom. Both weighting methods break linear order preservation, moreover, the ranking according to the Eigenvector Method depends on the incomplete pairwise comparison representation chosen.

\keywords{Directed acyclic graph; incomplete pairwise comparison matrix; Eigenvector Method; Logarithmic Least Squares Method}

\JEL{C44}

\AMS{15A06, 90B50, 91B08}
\end{abstract}

\section{Introduction} \label{Theory}

Pairwise comparisons are widely used in multi-attribute decision making since Saaty published the AHP method \citep{Saaty1980}.
It is assumed that decision makers give a numerical answer to the question 'How many times is the $i$th alternative more important/better/favorable than the $j$th?', which are incorporated into a matrix with an appropriate size.

Let $\mathbb{R}^n_+$ denote the positive orthant of the $n$-dimensional Euclidean space and $\mathbb{R}^{n \times n}_+$ denote the set of positive matrices of size $n \times n$.

\begin{definition}
\emph{Pairwise comparison matrix}: Matrix $\mathbf{A}  = \left[ a_{ij} \right] \in \mathbb{R}^{n \times n}_+$ is a \emph{pairwise comparison matrix} if $a_{ji} = 1/a_{ij}$ for all $i,j = 1,2, \dots ,n$.
\end{definition}

The final aim of the use of pairwise comparisons is to determine a weight vector $\mathbf{w} = \left[ w_i \right] \in \mathbb{R}^n_+$ for the alternatives such that $w_i/w_j$ somehow approximates $a_{ij}$.

\begin{definition}
\emph{Consistency}: Pairwise comparison matrix $\mathbf{A}  = \left[ a_{ij} \right]$ is \emph{consistent} if $a_{ik} = a_{ij}a_{jk}$ for all $i,j,k = 1,2, \dots ,n$.
\end{definition}

Every consistent pairwise comparison matrix can be associated to a weight vector $\mathbf{w}$ where $a_{ij} = w_i/w_j$ for all $i,j = 1,2, \dots ,n$.
Vector $\mathbf{w}$ is unique up to multiplication by positive scalars.

Pairwise comparison matrices provided by decision makers are usually do not meet the consistency condition. In other words, they are \emph{inconsistent}. Then the real weight vector $\mathbf{w}$ can only be estimated on the basis of the inconsistent pairwise comparison matrix.
A number of weighting methods is proposed for this purpose.

\citet{Saaty1980} used the Perron theorem \citep{Perron1907}: a positive matrix has a dominant eigenvalue with multiplicity one and an associated strictly positive (right) eigenvector.

\begin{definition}
\emph{Eigenvector Method} ($EM$) \citep{Saaty1980}: $EM$ gives the weight vector $\mathbf{w}^{EM}(\mathbf{A}) \in \mathbb{R}^n_+$ for any pairwise comparison matrix $\mathbf{A}$ such that
\[
\mathbf{A} \mathbf{w}^{EM}(\mathbf{A}) = \lambda_{\max} \mathbf{w}^{EM}(\mathbf{A}),
\]
where $\lambda_{\max}$ denotes the maximal eigenvalue, also known as Perron eigenvalue, of matrix $\mathbf{A}$.
\end{definition}

Distance-minimization techniques minimize the function $\sum_i \sum_j d(a_{ij}, w_i/w_j)$, where $d(a_{ij}, w_i/w_j)$ is some sort of a distance of $a_{ij}$ from its approximation $w_i/w_j$. The following is an important example with $d(a_{ij}, w_i/w_j) = \left[ \log a_{ij} - \log \left( w_i / w_j \right) \right]^2$.

\begin{definition}
\emph{Logarithmic Least Squares Method} ($LLSM$) \citep{CrawfordWilliams1980, CrawfordWilliams1985, DeGraan1980}: $LLSM$ gives the weight vector $\mathbf{w}^{LLSM} (\mathbf{A}) \in \mathbb{R}^n_+$ for any pairwise comparison matrix $\mathbf{A}$ as the optimal solution of the problem:
\[
\min_{\mathbf{w} \in \mathbb{R}^n_+, \, \sum_{i=1}^n w_i = 1} \sum_{i=1}^n \sum_{j=1}^n \left[ \log a_{ij} - \log \left( \frac{w_i}{w_j} \right) \right]^2.
\]
\end{definition}

It may also happen that some pairwise comparisons are unknown due to the lack of available data, uncertain evaluations, or other problems. Incomplete pairwise comparison matrices were introduced in \citet{Harker1987}.

\begin{definition}
\emph{Incomplete pairwise comparison matrix}: Matrix $\mathbf{A} = \left[ a_{ij} \right]$ of size $n \times n$ is an \emph{incomplete pairwise comparison matrix} if $a_{ii} = 1$ for all $i = 1,2, \dots ,n$, and for all $i \neq j$, $a_{ji} = 1/a_{ij} \in \mathbb{R}_+$ or both $a_{ij}$ and $a_{ji}$ are missing.
\end{definition}

\begin{notation}
Missing elements of pairwise comparison matrices are denoted by $\ast$.
\end{notation}

\begin{example}
The following pairwise comparison matrix of size $4 \times 4$ is incomplete:
\[
 \mathbf{A}=
\left(
 \begin{array}{cccc}
 1		&   *       	&   a_{13}  	&   a_{14}  \\
 * 		&   1       	&   a_{23}  	&   *		\\
1/a_{13}&   1/a_{23}	&   1       	&   a_{34}  \\
1/a_{14}&   *       	&   1/a_{34}	&   1		\\
\end{array}
\right).
\]
\end{example}

Generalization of $EM$ to incomplete pairwise comparison matrices requires some comment on measuring inconsistency.
Saaty \citep{Saaty1980} defined the $CR$ index as
\[
CR(\mathbf{A}) =
\frac{\left( \lambda_{\max}(\mathbf{A}) - n \right) / (n-1)}{\left( \overline{\lambda_{\max}^{n \times n}} - n \right) / (n-1)} =
\frac{\lambda_{\max}(\mathbf{A}) - n}{\overline{\lambda_{\max}^{n \times n}} - n},
\]
where $\overline{\lambda_{\max}^{n \times n}}$ denotes the average value of the maximal eigenvalue of randomly generated pairwise comparison matrices of size $n \times n$ such that each element $a_{ij}$, $i < j$ is chosen from the set $\{  1/9; 1/8; \ldots; 1/2; 1; 2; \ldots; 8; 9 \}$ with equal probability. $CR(\mathbf{A})$ is a positive linear transformation of $\lambda_{\max}(\mathbf{A})$. $CR(\mathbf{A}) \geq 0$ and $CR(\mathbf{A}) = 0$ if and only if $\mathbf{A}$ is consistent. Saaty recommended the rule of acceptability $CR < 0.1$.

The idea that larger $\lambda_{\max}$ indicates higher ($CR$) inconsistency led \citep{ShiraishiObataDaigo1998,ShiraishiObata2002} to introduce variables for missing elements, arranged in vector $\mathbf{x}$ and consider the eigenvalue optimization problem
\begin{equation*}
\underset{{\mathbf{x} > 0}}{\min} \, \lambda_{max}(\mathbf{A}(\mathbf{x}))
\end{equation*}
in order to find a completion that minimizes the maximal eigenvalue, or, equivalently, $CR$.
%As in case of incomplete $LLSM$, uniqueness is closely related to the connectedness of $G$.

Extension of distance-based weighting methods to the incomplete case seems to be straightforward: when calculating the optimal weights, only the known terms are considered in the objective function \citep{Kwiesielewicz1996, BozokiFulopRonyai2010}.

\citet{BozokiFulopRonyai2010} discuss the question of uniqueness of the optimal solution for $EM$ and $LLSM$ in the incomplete case, solve the $LLSM$ problem\footnote{~See also \citet{KaiserSerlin1978}.} and propose an algorithm for finding the best completion of an incomplete pairwise comparison matrix according to $EM$. We will use their results extensively.

%The question of uniqueness of the optimal solution for $EM$ and $LLSM$ in the incomplete case occurs naturally. We will

This paper investigates a special class of preferences described by incomplete pairwise comparison matrices (Section~\ref{Sec2}), for which some natural rankings of the alternatives exist. Section~\ref{Sec3} reveals that $LLSM$ does not result in one of these orders. Section~\ref{Sec4} presents that $EM$ does not meet the required condition either. Moreover, the ranking depends on the representation chosen. These are the main results of our paper. Finally, in Section~\ref{Sec5}, we pose some related questions.

\section{Linear order preservation} \label{Sec2}

%Sometimes the decision maker is unable to give a numerical answer to the question 'How many times is the $i$th alternative more important/better/favorable than the $j$th?'.
Sometimes the decision maker can only provide an ordinal information such as the $i$th alternative is preferred to the $j$th \citep{Jensen1986, GenestLapointeDrury1993}. In this model, incomplete pairs (missing comparisons) are allowed but draws are excluded: when the $i$th and the $j$th alternatives have been compared, the $i$th or the $j$th is preferred to the other.

\begin{definition} %\label{Def5}
\emph{Ordinal pairwise comparison matrix}:
Incomplete pairwise comparison matrix $\mathbf{A} = \left[ a_{ij} \right]$ of size $n \times n$ is an \emph{ordinal pairwise comparison matrix} if $a_{ii} = 1$ for all $i = 1,2, \dots ,n$, and for all $i \neq j$, $a_{ij} \in \{ b;1/b \}$ or both $a_{ij}$ and $a_{ji}$ are missing. A real number $b>1$ is an arbitrarily fixed.
\end{definition}

Note that the value $b>1$ corresponds to the (strict) preference relation between the alternatives.

Ordinal pairwise comparison matrices can be represented by directed graphs. Let $\mathbf{A}$ be an ordinal pairwise comparison matrix of size $n \times n$. Then $G := (V,E)$ where $V = \{ 1,2, \dots ,n \}$, the vertices correspond to the alternatives, and $E = \{ e(i,j): a_{ij} = b, \, i \neq j \}$, there is a directed edge from vertex $i$ to vertex $j$ if and only if the $i$th alternative is preferred to the $j$th. The directed graph associated to an ordinal pairwise comparison matrix $\mathbf{A}$ is independent of the value $b>1$.

Note that different choice of the parameter $b>1$ is equivalent to taking a corresponding element-wise (positive) power of $\mathbf{A}$. In other words, the associated directed graph is the same for every $\mathbf{A}^{(h)} = \left[ a_{ij}^h \right]$, $h>0$.

\begin{definition} %\label{Def6}
\emph{Weak connectedness}:
Let $\mathbf{A} = \left[ a_{ij} \right]$ be an ordinal pairwise comparison matrix of size $n \times n$.
The directed graph associated to $\mathbf{A}$ is \emph{weakly connected} if for all $k,\ell = 1,2, \dots ,n$, there exists a sequence of alternatives $k = m_0, m_1, \dots ,m_{t-1},m_t = \ell$ such that $a_{m_{s-1} m_s}$ is known for all $s=1,2 \dots ,t$.
\end{definition}

In an ordinal pairwise comparison matrix represented by a weakly connected directed graph, all alternatives are compared directly or indirectly (i.e. through other alternatives).

%, so invariant under positive power by each element.

\begin{definition} %\label{Def6}
\emph{Existence of a linear order of the alternatives}:
Let $\mathbf{A} = \left[ a_{ij} \right]$ be an ordinal pairwise comparison matrix of size $n \times n$. There \emph{exists a linear order of the alternatives} if there is a permutation $\sigma: \{ 1;2; \dots ;n \} \rightarrow \{ 1;2; \dots ;n \}$ on the set of alternatives such that $\mathbf{C} = \left[ c_{ij} \right]$ is the permuted ordinal pairwise comparison matrix given by $c_{ij} = a_{\sigma{(i)} \sigma{(j)}}$ for all $i,j = 1,2, \dots ,n$ and $c_{ij} = b$ if $i < j$ and $c_{ij}$ is known.
\end{definition}

Existence of a linear order of the alternatives means that the ordinal pairwise comparison matrix can be permuted such that every known value above the diagonal is $b>1$. Regarding the directed graph representation, it is equivalent to acyclicity.

The following condition concerns the weighting methods for ordinal pairwise comparison matrices. A similar requirement, called \emph{Condition of Order Preservation} (COP), was introduced by \citet{BanaeCostaVansnick2008}. However, it is defined on complete pairwise comparison matrices and takes into account the intensity of preferences.

\begin{definition} %\label{Def7}
\emph{Linear order preservation} ($LOP$):
Let $\mathbf{A} = \left[ a_{ij} \right]$ be an ordinal pairwise comparison matrix of size $n \times n$ such that there exists a linear order of the alternatives. It can be assumed without loss of generality that $a_{ij} = b$ if $i < j$ and $a_{ij}$ is known.
A weighting method associating a vector $\mathbf{w}(\mathbf{A}) \in \mathbb{R}^n_+$ to $\mathbf{A}$ satisfies \emph{linear order preservation} if $w_i(\mathbf{A}) \geq w_j(\mathbf{A})$ for all $i < j$ such that $a_{ij}$ is known ($a_{ij} = b$).
\end{definition}

In an ordinal pairwise comparison matrix exhibiting a linear order of the alternatives, there exist some 'natural rankings'. Linear order preservation requires that the ranking according to the weighting method examined always corresponds to one of them.

Note that a weighting method associating the same weight for each alternative meets the property $LOP$.

\section{Linear order preservation and the Logarithmic \\ Least Squares Method} \label{Sec3}

In this section it will be scrutinized whether $LLSM$ satisfies the property $LOP$.

\begin{notation}
$\mathbf{y}(\mathbf{A}) \in \mathbb{R}^n$ is given by $y_i(\mathbf{A}) = \log w^{LLSM}_i(\mathbf{A})$ for all $i = 1,2, \dots ,n$.
\end{notation}

\begin{proposition} \label{Prop1}
Let $\mathbf{A} = \left[ a_{ij} \right]$ be an ordinal pairwise comparison matrix of size $n \times n$.
Vector $\mathbf{w}^{LLSM}(\mathbf{A})$ is unique if and only if the directed graph associated to $\mathbf{A}$ is weakly connected.
Then the ranking of alternatives is independent of the value of $b>1$, that is, 
\[
w^{LLSM}_i (\mathbf{A}) \geq w^{LLSM}_j (\mathbf{A}) \Leftrightarrow w^{LLSM}_i (\mathbf{A}^{(h)}) \geq w^{LLSM}_j (\mathbf{A}^{(h)})
\]
for all $i,j = 1,2, \dots ,n$ and $h>0$, where $\mathbf{A}^{(h)} = \left[ a_{ij}^h \right]$.
\end{proposition}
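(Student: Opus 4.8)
The plan is to pass to logarithms and recognise the $LLSM$ objective as an ordinary unconstrained least-squares problem whose normal equations are governed by the Laplacian of the comparison graph. Writing $y_i = \log w_i$ as in the notation above, and using that $a_{ji} = 1/a_{ij}$ forces the summand for the pair $(j,i)$ to equal that for $(i,j)$, the objective becomes, up to the harmless factor $2$,
\[
f(\mathbf{y}) = \sum_{a_{ij} \text{ known}, \ i<j} \bigl[ \log a_{ij} - (y_i - y_j) \bigr]^2 ,
\]
a convex quadratic in $\mathbf{y}$ that depends on $\mathbf{y}$ only through the differences across known comparisons. The normalization $\sum_i w_i = 1$ merely selects one representative along the direction in which $f$ is invariant under adding a common constant to all $y_i$.

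First I would treat uniqueness. Setting the gradient of $f$ to zero yields the normal equations $L\mathbf{y} = \mathbf{r}$, where $L = D - M$ is the unweighted Laplacian of the underlying undirected comparison graph $G$ (with $M$ its adjacency matrix and $D$ the diagonal degree matrix) and $r_k = \sum_{j : \{k,j\} \text{ edge}} \log a_{kj}$. The matrix $L$ is symmetric positive semidefinite, and its kernel is spanned by the indicator vectors of the connected components of $G$, so $\dim \ker L$ equals the number of components. Weak connectedness of the directed graph is exactly connectedness of $G$, i.e.\ $\dim \ker L = 1$ with kernel the constant vectors; then $\mathbf{y}$ is determined up to an additive constant, $\mathbf{w}$ up to a positive scalar, and the constraint $\sum_i w_i = 1$ makes it unique. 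Conversely, if $G$ has $p \geq 2$ components one may shift the $y$-values on different components by distinct constants without changing $f$; this multiplies the weights on each component by independent positive factors, and the single equation $\sum_i w_i = 1$ cannot remove this $(p-1)$-parameter freedom, so $\mathbf{w}^{LLSM}$ is not unique. This equivalence is essentially the $LLSM$ solution of \citet{BozokiFulopRonyai2010}, which may be invoked directly.

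For the independence of the ranking from $b$, the key observation is that passing from $\mathbf{A}$ to $\mathbf{A}^{(h)} = [a_{ij}^h]$ replaces every $\log a_{ij}$ by $h \log a_{ij}$, hence scales the data vector $\mathbf{r}$ by $h$ while leaving $L$ unchanged. By linearity of the normal equations, if $\mathbf{y}$ solves $L\mathbf{y} = \mathbf{r}$ then $h\mathbf{y}$ solves $L(h\mathbf{y}) = h\mathbf{r}$; under weak connectedness this solution is unique up to an additive constant, so the optimal differences satisfy $\bigl( y_i - y_j \bigr)(\mathbf{A}^{(h)}) = h \bigl( y_i - y_j \bigr)(\mathbf{A})$ for all $i,j$. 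Since $w_i^{LLSM} \geq w_j^{LLSM}$ is equivalent to $y_i - y_j \geq 0$, and multiplication by $h > 0$ preserves the sign of each difference, the stated equivalence of rankings follows.

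The routine part is the scaling argument of the last paragraph, which is immediate once the problem is linearized. The main point requiring care is the uniqueness equivalence: one must justify that the kernel of the comparison-graph Laplacian is precisely the span of the component indicator vectors, and check that the single normalization constraint neither over- nor under-determines the weights, so that non-uniqueness of $\mathbf{w}$ corresponds exactly to disconnectedness of $G$.
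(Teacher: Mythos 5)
Your proof is correct and follows essentially the same route as the paper: the paper's identity $\mathbf{y}(\mathbf{A}) = \mathbf{D}(\mathbf{A})\mathbf{r}(\mathbf{A})$ with $\mathbf{r}$ scaling by $h$ and $\mathbf{D}$ fixed is exactly your Laplacian normal-equation system $L\mathbf{y} = \mathbf{r}$ with $L$ unchanged, and both arguments conclude via linearity that all log-differences scale by $h>0$ and hence the ranking is preserved. The only difference is that you prove in-house (via the kernel of the comparison-graph Laplacian) the uniqueness equivalence that the paper simply cites from \citet[Theorem 4]{BozokiFulopRonyai2010}.
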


\begin{proof}
The necessary and sufficient condition for uniqueness is given by \citet[Theorem 4]{BozokiFulopRonyai2010}.

$\mathbf{y}(\mathbf{A}) = \mathbf{D}(\mathbf{A}) \mathbf{r}(\mathbf{A})$ where $\mathbf{r}(\mathbf{A}) = \left[ \sum_{j: a_{ij} \text{is known}} \log a_{ij} \right] \in \mathbb{R}^n$, so $\mathbf{r}(\mathbf{A}^{(h)}) = h \mathbf{r}(\mathbf{A})$ and $\mathbf{D}(\mathbf{A})$ depends only on the positions of known comparisons but is not affected by their values \citep[Remark 3]{BozokiFulopRonyai2010}. Therefore $\mathbf{y}(\mathbf{A}^{(h)}) = h \mathbf{y}(\mathbf{A})$, which proves Proposition~\ref{Prop1}.
\end{proof}

Since linear order preservation is based on the directed acyclic graph representation of an ordinal pairwise comparison matrix, Proposition~\ref{Prop1} states that it is meaningful to question whether $LLSM$ satisfies $LOP$.

\begin{corollary} \label{Col1}
It does not depend on the choice of $b>1$ whether $LLSM$ satisfies $LOP$ or not. In other words, $LLSM$ gives the same ranking for every ordinal pairwise comparison matrix associated to a given directed acyclic graph.
\end{corollary}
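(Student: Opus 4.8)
The plan is to read the Corollary off Proposition~\ref{Prop1}, together with the observation recorded in Section~\ref{Sec2} that a different choice of the parameter amounts to an element-wise positive power of $\mathbf{A}$. First I would make explicit that $LLSM$ satisfies $LOP$ for a given $\mathbf{A}$ exactly when the finite conjunction of inequalities $w^{LLSM}_i(\mathbf{A}) \geq w^{LLSM}_j(\mathbf{A})$ holds, taken over all ordered pairs $(i,j)$ with $i<j$ and $a_{ij}=b$ known, i.e. over the edges of the associated directed acyclic graph. In this way, proving $b$-independence of $LOP$ reduces to proving $b$-independence of each such inequality.

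Next I would translate a change of parameter into the language of Proposition~\ref{Prop1}. Replacing $b$ by any other value $b'>1$ is the same as passing from $\mathbf{A}$ to $\mathbf{A}^{(h)} = \left[ a_{ij}^h \right]$ with $h = \ln b' / \ln b$; since $b,b'>1$ we have $h>0$, and this power transformation leaves the associated directed graph unchanged. Assuming weak connectedness, which is the hypothesis under which Proposition~\ref{Prop1} guarantees $\mathbf{w}^{LLSM}$, and hence the induced ranking, to be well defined, Proposition~\ref{Prop1} gives $w^{LLSM}_i(\mathbf{A}) \geq w^{LLSM}_j(\mathbf{A}) \Leftrightarrow w^{LLSM}_i(\mathbf{A}^{(h)}) \geq w^{LLSM}_j(\mathbf{A}^{(h)})$ for every pair $(i,j)$.

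Combining the two steps closes the argument: each inequality in the conjunction defining $LOP$ is preserved under $\mathbf{A} \mapsto \mathbf{A}^{(h)}$, so $LOP$ holds for $\mathbf{A}$ if and only if it holds for $\mathbf{A}^{(h)}$, and therefore the validity of $LOP$ does not depend on $b$. The sharper ``same ranking'' assertion follows immediately, because the equivalence of Proposition~\ref{Prop1} is stated for all pairs $(i,j)$ and not only for edges: the complete weak order that $LLSM$ induces on the alternatives is identical for every admissible $b$.

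The only point requiring care is the well-definedness of ``the ranking''. A directed acyclic graph need not be weakly connected, and in the disconnected case $\mathbf{w}^{LLSM}$ is not unique by Proposition~\ref{Prop1}, so the ranking, and with it the statement itself, would be ambiguous; I would therefore confine the claim to weakly connected graphs, which is precisely the regime in which Proposition~\ref{Prop1} applies and the ranking is meaningful. Apart from this caveat the Corollary is a direct logical consequence of Proposition~\ref{Prop1} and involves no further computation.
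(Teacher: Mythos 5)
Your proposal is correct and matches the paper's intended argument: the paper gives no separate proof of Corollary~\ref{Col1}, treating it as an immediate consequence of Proposition~\ref{Prop1} via exactly the reduction you spell out (change of $b$ equals an element-wise power $\mathbf{A}^{(h)}$ with $h>0$, and the proposition's equivalence preserves every inequality defining $LOP$). Your added caveat restricting to weakly connected graphs, where $\mathbf{w}^{LLSM}$ and hence the ranking are well defined, is a sensible explicit acknowledgment of a hypothesis the paper leaves implicit.
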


%Now we are able to present an important result.
Intuition appears to suggest that LLSM satisfies LOP. The first of our main results contradicts this expectation.

\begin{theorem} \label{Theo1}
$LLSM$ may violate $LOP$.
\end{theorem}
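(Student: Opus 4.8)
The statement is an existence claim, so the plan is to exhibit one ordinal pairwise comparison matrix whose associated directed graph is a weakly connected DAG --- so that a linear order exists and $\mathbf{w}^{LLSM}$ is unique by Proposition~\ref{Prop1} --- and on which the $LLSM$ weights fail to be monotone along some edge. By Corollary~\ref{Col1} the answer is independent of $b$, so I may fix $b=e$. Writing $y_i=\log w_i^{LLSM}$ and following the characterization of the incomplete $LLSM$ solution in \citet{BozokiFulopRonyai2010}, the vector $\mathbf{y}$ is, up to an additive constant, the unique solution of the graph-Laplacian system $\mathbf{L}\mathbf{y}=\mathbf{r}$, where $\mathbf{L}$ is the Laplacian of the undirected comparison graph and $r_i$ is the number of alternatives beaten by $i$ minus the number beating $i$. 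Equivalently $d_i y_i = r_i + \sum_{j\sim i} y_j$: each log-weight is the average of its neighbours' log-weights plus the normalized net-win term $r_i/d_i$. This averaging identity is the only tool I would need.

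The design principle is to apply the identity at a single vertex. Suppose a vertex $u$ has exactly two out-neighbours, a ``strong'' vertex $h$ and a ``deep sink'' $\ell$, and no other comparison. Then $d_u=2$ and $r_u=2$, so $y_u = 1 + \tfrac12(y_h+y_\ell)$, whence $y_u < y_h$ precisely when $y_h-y_\ell>2$. Thus a violation of $LOP$ on the edge $u\to h$ follows as soon as I can force the gap between the two alternatives $u$ beats to exceed $2$. The hard part is that $u$ itself creates an undirected path $h - u - \ell$ of length two, a shortcut that flattens the potential and tends to keep $y_h-y_\ell$ below $2$; indeed, joining $h$ and $\ell$ by a single directed chain on $m$ vertices yields a gap of only $2-4/m<2$, so naive constructions fail.

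To defeat the shortcut I would insert a stiff gadget between $h$ and $\ell$: take $p$ vertex-disjoint directed paths of length $L$ running from $h$ down to $\ell$, and add the two edges $u\to h$ and $u\to\ell$. The graph is acyclic (topologically order $u,h$, then the path interiors by level, then $\ell$) and weakly connected, so Proposition~\ref{Prop1} applies. By the symmetry among the $p$ paths all interior vertices at a given level share one log-weight, and the averaging identity at the degree-two interior vertices (where $r=0$) forces zero second differences, so $y_h,\dots,y_\ell$ form an arithmetic progression with common drop $\delta$. The identities at $h$, $\ell$ and $u$ then collapse to a small linear system whose solution is $\delta = 2p/(L+2p)$ and
\[
y_u - y_h \;=\; 1 - \frac{L}{2}\,\delta \;=\; \frac{L+2p-Lp}{L+2p},
\]
which is negative as soon as $L(p-1)>2p$, e.g. for $p=4$, $L=3$ (an eleven-vertex example), giving $y_u-y_h=-1/11<0$.

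It then remains only to read off the conclusion: $w_u^{LLSM} < w_h^{LLSM}$ although $a_{uh}=b$ is a known comparison with $u$ preferred to $h$, so $LLSM$ violates $LOP$. A routine check confirms the weight order is $y_h > y_u > (\text{interiors}) > y_\ell$, so every other edge is respected and the single broken relation is exactly $u\to h$; in particular the matrix genuinely admits a linear order. I expect the reduction-by-symmetry computation of the displayed formula to be the only substantive step, with everything else amounting to bookkeeping.
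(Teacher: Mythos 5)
Your proof is correct --- I verified the Laplacian system for your gadget (with $b=e$, $d_iy_i=r_i+\sum_{j\sim i}y_j$ at every vertex): symmetry across the $p$ paths and the zero-second-difference condition at the interior vertices do give $\delta=2p/(L+2p)$ and $y_u-y_h=(L+2p-Lp)/(L+2p)$, so $p=4$, $L=3$ yields $y_u-y_h=-1/11$ on an $11$-vertex weakly connected DAG, and all the side claims (uniqueness via Proposition~\ref{Prop1}, the ordering $y_h>y_u>z_1>z_2>y_\ell$, exactly one broken edge) check out. However, your route differs from the paper's. The paper proves Theorem~\ref{Theo1} by brute exhibition: a single $7$-alternative, $11$-comparison matrix (Example~\ref{Examp2}) whose LLSM log-weight vector is computed outright, $\left[34,36,24,1,-14,-42,-39\right]^\top\log b/49$, with $y_1<y_2$; no structural mechanism is offered there, and Remark~\ref{Rem1} certifies by exhaustive search that $7$ alternatives is minimal. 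Your construction is instead a parametric family solved in closed form by exploiting graph automorphisms --- which is essentially the technique the paper itself deploys only later, in the proof of Theorem~\ref{Theo2} (the symmetric classes $C_i$ of Example~\ref{Examp4}, where vertex $1$ plays the role of your $u$, beating both the strong vertex $2$ and the weak class $C_m$). What your approach buys is a transparent necessary-and-sufficient local criterion ($y_u<y_h$ iff $y_h-y_\ell>2$) and an explicit threshold $L(p-1)>2p$; indeed, since $y_u-y_h\to-\infty$ as $p,L\to\infty$ together (e.g.\ $p=L$ gives $y_u-y_h=(3-L)/3$), your family would prove Theorem~\ref{Theo2} as well, just as the paper's Example~\ref{Examp4} does. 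What it costs is size: your smallest instance has $11$ alternatives versus the paper's minimal $7$ (and the paper separately records $8$-alternative examples with only $10$ comparisons in Remark~\ref{Rem2}). One cosmetic point: your motivational remark that a single chain yields a gap of $2-4/m$ is right only if $m$ counts all vertices of the graph including $u$ (a chain with $L$ edges gives gap $2L/(L+2)=2-4/(L+2)$); this is harmless since the remark plays no role in the argument.
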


\begin{proof}
It is provided by Example \ref{Examp2}.

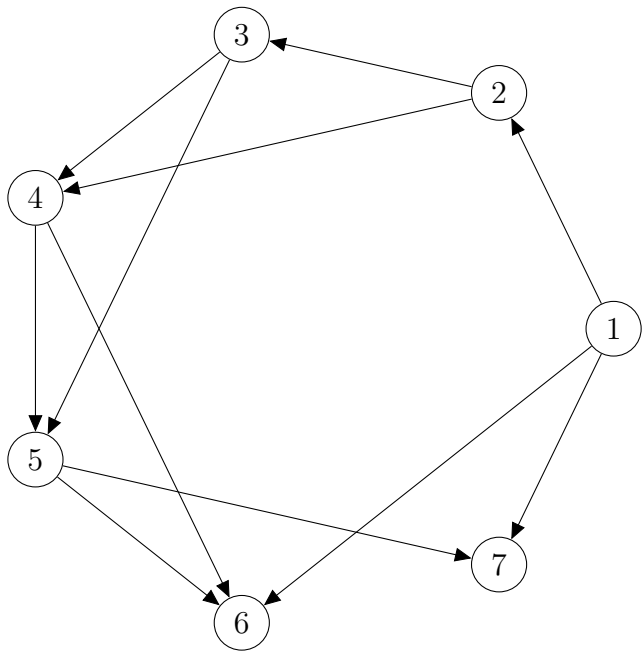
\begin{figure}[!ht]
\centering
\caption{The directed acyclic graph of Example~\ref{Examp2}}
\label{Fig1}
\begin{tikzpicture}[scale=1, auto=center, transform shape, >=triangle 45]
\tikzstyle{every node}=[draw,shape=circle];
  \node (n1)  at (0:4)  {$1$};
  \node (n2)  at (360/7:4)   {$2$};
  \node (n3)  at (2*360/7:4)   {$3$};
  \node (n4)  at (3*360/7:4)  {$4$};
  \node (n5)  at (4*360/7:4)  {$5$};
  \node (n6)  at (5*360/7:4)  {$6$};
  \node (n7)  at (6*360/7:4)  {$7$};

  \foreach \from/\to in {n1/n2,n1/n6,n1/n7,n2/n3,n2/n4,n3/n4,n3/n5,n4/n5,n4/n6,n5/n6,n5/n7}
    \draw [->] (\from) -- (\to);
\end{tikzpicture}
\end{figure}

\begin{example} \label{Examp2}
Consider the directed acyclic graph on Figure~\ref{Fig1}. The associated ordinal pairwise comparison matrix $\mathbf{A}$ is as follows:
\[
\mathbf{A}=
\left(
\begin{array}{cccccccc}
    1     & b     & \ast & \ast & \ast & b     & b \\
    1/b   & 1     & b     & b     & \ast & \ast & \ast \\
    \ast & 1/b   & 1     & b     & b     & \ast & \ast \\
    \ast & 1/b   & 1/b   & 1     & b     & b     & \ast \\
    \ast & \ast & 1/b   & 1/b   & 1     & b     & b \\
    1/b   & \ast & \ast & 1/b   & 1/b   & 1     & \ast \\
    1/b   & \ast & \ast & \ast & 1/b   & \ast & 1 \\
\end{array}
\right),
\]
where $b>1$.
\end{example}

In Example \ref{Examp2}, property $LOP$ is satisfied if $w_1(\mathbf{A}) \geq w_2(\mathbf{A})$, $w_1(\mathbf{A}) \geq w_6(\mathbf{A})$, $w_1(\mathbf{A}) \geq w_7(\mathbf{A})$ as well as $w_i(\mathbf{A}) \geq w_{i+1}(\mathbf{A})$ and $w_i(\mathbf{A}) \geq w_{i+2}(\mathbf{A})$ for all $i = 2,3,4,5$.

However, $LLSM$ results in
\[
\mathbf{y}(\mathbf{A}) = \left[
\begin{array}{ccccccc}
    34    & 36    & 24    & 1     & -14   & -42   & -39 \\
\end{array}
\right]^\top \log b / 49,
\]
namely, $w_1^{LLSM}(\mathbf{A}) < w_2^{LLSM}(\mathbf{A})$, in contradiction with preservation of linear order.
\end{proof}

\begin{remark} \label{Rem1}
Example~\ref{Examp2} is minimal regarding the number of alternatives ($7$) and among them, with respect to the number of known comparisons ($11$).\footnote{~It can be verified by brute force, examining all ordinal pairwise comparison matrices up to size $6 \times 6$. It is possible because of Corollary \ref{Col1}, which implies that comparisons above the diagonal may have essentially two 'values', known or missing. There exist $2^{15} = 32\,768$ acyclic directed graphs of size $6 \times 6$.}
However, there exist more than ten examples with $7$ alternatives, and some of them contain only $11$ known comparisons.
\end{remark}

\begin{remark} \label{Rem2}
There exist some examples to Theorem \ref{Theo1} with $8$ alternatives and $10$ known comparisons. Two of them are presented in Example~\ref{Examp3}.
\end{remark}

\begin{figure}[!ht]
\centering
\caption{The directed acyclic graphs of Example~\ref{Examp3}}
\label{Fig2}

\begin{subfigure}{0.48\textwidth}
	\centering
	\caption{Graph of matrix $\mathbf{A}$}
	\label{Fig2a}
	
\begin{tikzpicture}[scale=0.8, auto=center, transform shape, >=triangle 45]
\tikzstyle{every node}=[draw,shape=circle];
  \node (n1)  at (112.5:4)  {$1$};
  \node (n2)  at (67.5:4)   {$2$};
  \node (n3)  at (22.5:4)   {$3$};
  \node (n4)  at (337.5:4)  {$4$};
  \node (n5)  at (292.5:4)  {$5$};
  \node (n6)  at (247.5:4)  {$6$};
  \node (n7)  at (202.5:4)  {$7$};
  \node (n8)  at (157.5:4)  {$8$};

  \foreach \from/\to in {n1/n2,n1/n7,n1/n8,n2/n3,n2/n4,n3/n5,n4/n5,n5/n6,n5/n8,n6/n7}
    \draw [->] (\from) -- (\to);
\end{tikzpicture}
\end{subfigure}
\begin{subfigure}{0.48\textwidth}
	\centering
	\caption{Graph of matrix $\mathbf{A'}$}
	\label{Fig2b}
	
\begin{tikzpicture}[scale=0.8, auto=center, transform shape, >=triangle 45]
\tikzstyle{every node}=[draw,shape=circle];
  \node (n1)  at (112.5:4)  {$1$};
  \node (n2)  at (67.5:4)   {$2$};
  \node (n3)  at (22.5:4)   {$3$};
  \node (n4)  at (337.5:4)  {$4$};
  \node (n5)  at (292.5:4)  {$5$};
  \node (n6)  at (247.5:4)  {$6$};
  \node (n7)  at (202.5:4)  {$7$};
  \node (n8)  at (157.5:4)  {$8$};

  \foreach \from/\to in {n1/n4,n1/n8,n2/n3,n2/n8,n3/n4,n4/n5,n5/n6,n5/n7,n6/n8,n7/n8}
    \draw [->] (\from) -- (\to);
\end{tikzpicture}
\end{subfigure}
\end{figure}
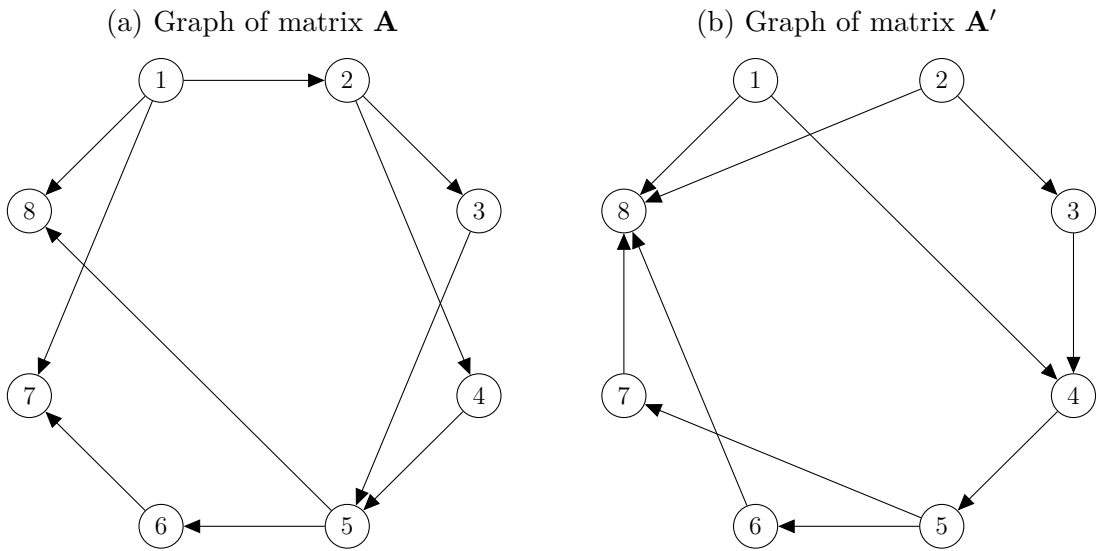

\begin{example} \label{Examp3}
Consider the directed acyclic graphs on Figure~\ref{Fig2} and the associated ordinal pairwise comparison matrices $\mathbf{A}$ and $\mathbf{A'}$. $LLSM$ gives
\[
\mathbf{y}(\mathbf{A}) = \left[
\begin{array}{cccccccc}
    95    & 103   & 43    & 43    & -17   & -65   & -113  & -89 \\
\end{array}
\right]^\top \log b / 128
\text{, and}
\]
\[
\mathbf{y}(\mathbf{A'}) = \left[
\begin{array}{cccccccc}
    71    & 95    & 47    & -1    & 7     & -53   & -53   & -113 \\
\end{array}
\right]^\top \log b / 128,
\]
where $w_1^{LLSM}(\mathbf{A}) < w_2^{LLSM}(\mathbf{A})$ and $w_4^{LLSM}(\mathbf{A'}) < w_5^{LLSM}(\mathbf{A'})$, in contradiction with preservation of linear order.
\end{example}

The violation of $LOP$ can be arbitrarily 'strong', regarding the difference of the weights of the alternatives involved.

\begin{theorem} \label{Theo2}
%Let $\mathbf{A} = (a_{ij})$ be an ordinal pairwise comparison matrix exhibiting a linear order of the alternatives with a given $b > 1$.
% Then $w_i^{LLSM}(\mathbf{A}) - w_j^{LLSM}(\mathbf{A})$ can be arbitrarily small for some $i < j$.
For every $K \in \mathbb{R}_+$ there exists an ordinal pairwise comparison matrix  $\mathbf{A} = \left[ a_{ij} \right]$ exhibiting a linear order of the alternatives with a given $b > 1$ such that $a_{ij}$ is known and $w_i^{LLSM}(\mathbf{A}) - w_j^{LLSM}(\mathbf{A}) \leq -K$ for some $i < j$.
\end{theorem}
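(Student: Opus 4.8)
The plan is to realize, at a \emph{fixed} $b$, the effect of the power transformation $\mathbf{A} \mapsto \mathbf{A}^{(h)}$ --- which by Proposition~\ref{Prop1} scales the log-weights by $h$ --- purely through the combinatorial structure of the graph, so that the potentials at a reversed pair grow without bound while $b$ stays put. First I would record the linear-algebraic form of the $LLSM$ solution used in the proof of Proposition~\ref{Prop1}: with the normalization $\sum_v y_v(\mathbf{A}) = 0$ (geometric mean of the weights equal to $1$, as in the displayed vectors of Example~\ref{Examp2}), one has $\mathbf{y}(\mathbf{A}) = (\log b)\,\mathbf{p}$, where $\mathbf{p}$ is the mean-zero solution of the Laplacian system $L\mathbf{p} = \mathbf{s}$; here $L$ is the Laplacian of the underlying undirected graph and $s_v = d_v^+ - d_v^-$ is the net out-degree of vertex $v$. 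Since $b$ is fixed, the weight gap at a reversed known edge $i \to j$ equals $e^{y_i}(1 - b^{\,p_j - p_i})$, so making $|w_i - w_j|$ exceed $K$ amounts to driving the \emph{level} $y_i$ to $+\infty$ while keeping a reversal $p_j > p_i$.

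The amplification I would use is edge subdivision. Starting from the graph $G$ of Example~\ref{Examp2}, where the reversal $p_2 > p_1$ occurs at the \emph{top} of the order (vertices $1$ and $2$ carry the two largest potentials, $p_1 = 34/49$ and $p_2 = 36/49$), form $G_h$ by replacing every directed edge of $G$ by a directed path of $h$ consecutive edges, each of value $b$. Then $G_h$ is again a weakly connected directed acyclic graph all of whose comparisons equal the same fixed $b$, so by Proposition~\ref{Prop1} it exhibits a linear order and its $LLSM$ weight vector is unique.

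The key computation is that subdivision multiplies the original-vertex potentials by $h$. Concretely, I would verify that the assignment giving $h\,p_v\log b$ to each original vertex $v$ and the linear interpolation along each subdivided path satisfies the stationarity (normal) equations of the $LLSM$ problem on $G_h$: an interior path vertex has net degree zero, so its equation merely states that it is the average of its two path-neighbours, which interpolation satisfies; at an original vertex the incident contributions telescope to $\tfrac1h\,(\log b)\,(L(h\mathbf{p}))_v = s_v\log b$, the required right-hand side. By convexity and the uniqueness from Proposition~\ref{Prop1}, this assignment is the $LLSM$ solution up to an additive constant --- equivalently, subdivision scales every effective resistance, hence every potential difference between original vertices, by $h$.

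Finally I would extract a reversed \emph{known} pair and check divergence. Take $m$ to be the first subdivision vertex on the path from $1$ to $2$: the comparison $a_{1m} = b$ is a genuine edge of $G_h$, yet the interpolated potentials give $y_m - y_1 = (p_2 - p_1)\log b = \tfrac{2}{49}\log b > 0$, a violation of $LOP$ at a known comparison with $1$ preceding $m$ in the order. After imposing $\sum_v y_v = 0$ one subtracts the mean potential $c$; here I would bound $c$ using the controlling quantity $\sum_v \deg(v)\,p_v = 26/49$, which shows that $c$ grows only like $\tfrac{13}{539}h\log b$, far below $\bar y_1 = \tfrac{34}{49}h\log b$, so that the normalized level $y_1 \to +\infty$. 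Hence $w_1 - w_m = -e^{y_1}\,(b^{2/49}-1) \to -\infty$, and choosing $h$ large enough gives $w_1 - w_m \le -K$. The one delicate point --- the main obstacle --- is exactly this normalization bookkeeping: because subdivision introduces many new vertices, I must confirm that the geometric-mean constraint does not drag the level of the reversed pair back down, i.e. that the bulk of the added vertices sit at low potential so that $\bar y_1 - c$ still diverges.
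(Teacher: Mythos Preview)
Your argument is correct and takes a genuinely different route from the paper. The paper builds a new parametric family (Example~\ref{Examp4}): two distinguished vertices $1,2$ together with $m$ layers $C_1,\dots,C_m$ of size $k$, with edges $1\to 2$, $1\to C_m$, $2\to C_1$, and $C_i\to C_{i+1}$; it then solves the Laplacian system explicitly, exploiting the within-layer symmetry, to obtain a closed form for $y_1-y_2$ and lets the parameters grow. You instead reuse the seven-vertex counterexample of Example~\ref{Examp2} and amplify it by $h$-fold edge subdivision, proving the clean structural fact that subdivision multiplies all original-vertex potentials by $h$; you then harvest the reversal at the known edge $(1,m)$ on the subdivided $1\!\to\!2$ path and drive the \emph{level} $y_1$ (rather than the gap $y_m-y_1$, which stays fixed at $\tfrac{2}{49}\log b$) to infinity. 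Your route buys generality --- it converts any single $LOP$ violation into an unbounded one without solving a new linear system --- while the paper's route yields an explicit formula tied to a concrete family. One point worth flagging: your conclusion $w_1-w_m\to-\infty$ hinges on the geometric-mean normalisation $\sum_v y_v=0$, under which weights are unbounded; under the sum-to-one constraint in the paper's formal definition of $LLSM$, every weight difference lies in $(-1,1)$, so the statement cannot hold literally for $K\ge 1$. You handle this by adopting the mean-zero convention visible in the displayed $\mathbf{y}(\mathbf{A})$ of Example~\ref{Examp2}, and the paper's own proof tacitly requires the same latitude.
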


\begin{proof}
It is provided by Example~\ref{Examp4} for any $k \geq 2$.

\begin{figure}[!ht]
\centering
\caption{The directed acyclic graph of Example~\ref{Examp4}}
\label{Fig3}
\begin{tikzpicture}[scale=1, auto=center, transform shape, >=triangle 45]
\tikzstyle{every node}=[draw,shape=circle];
  \node (n1)  at (112.5:4)  {$1$};
  \node (n2)  at (67.5:4)   {$2$};
  \node (n3)  at (22.5:4)   {$3$};
  \node (n4)  at (337.5:4)  {$4$};
  \node (n5)  at (292.5:4)  {$5$};
  \node (n6)  at (247.5:4)  {$6$};
  \node (n7)  at (202.5:4)  {$7$};
  \node (n8)  at (157.5:4)  {$8$};

  \foreach \from/\to in {n1/n2,n1/n7,n1/n8,n2/n3,n2/n4,n3/n5,n3/n6,n4/n5,n4/n6,n5/n7,n5/n8,n6/n7,n6/n8}
    \draw [->] (\from) -- (\to);
\end{tikzpicture}
\end{figure}
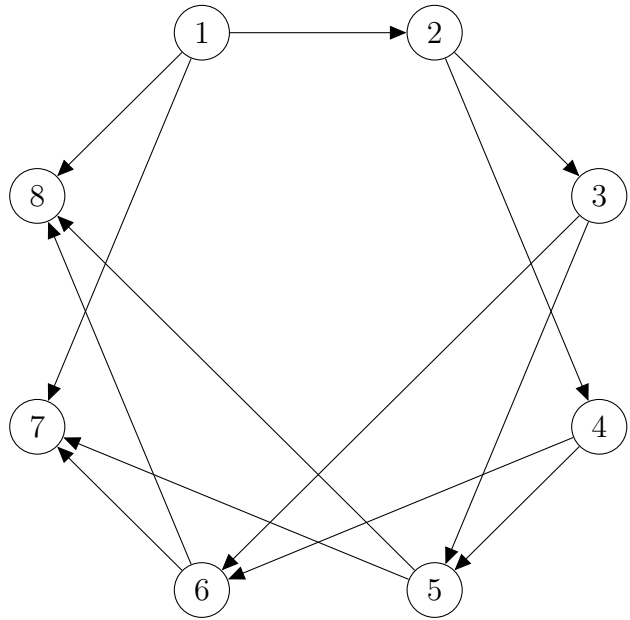

\begin{notation}
Let $S$ and $T$ be two sets of nodes in a directed acyclic graph.
$S \rightarrow T$ if and only if there exists an edge from every vertex $i \in S$ to every vertex $j \in T$. 
\end{notation}

\begin{example} \label{Examp4}
Consider the family of directed acyclic graphs with $n = k m + 2$ vertices where $C_i = \{ (i-1)k + 3, (i-1)k + 4, \dots , ik + 2 \}$ for all $i = 1,2, \dots m$ (so $C_i$ has $k$ elements) such that the edges are given by $\{ 1 \} \rightarrow \{ 2 \}$, $\{ 1 \} \rightarrow C_m$, $\{ 2 \} \rightarrow C_1$ and $C_i \rightarrow C_{i+1}$ for all $i = 1,2, \dots m-1$. Figure~\ref{Fig3} shows a member of this family if $k = 2$ and $m = 3$, that is, $C_1 = \{ 3,4 \}$, $C_1 = \{ 5,6 \}$ and $C_3 = \{ 7,8 \}$.
\end{example}

The directed graphs of Example~\ref{Examp4} are weakly connected for any pair of $k$ and $m$. $LLSM$ weights can be obtained as the solution of the following system of linear equations (note that alternatives of $C_i$ have the same weight $y_{C_i}$ since the $LLSM$ weight vector is unique and alternatives of $C_i$ are symmetric):
\begin{eqnarray}
(k + 1) y_1 - y_2 - k y_{C_m} & = & k + 1; \quad \text{(for the $1$st alternative)} \label{eq1} \\
(k + 1) y_2 - y_1 - k y_{C_1} & = & k - 1; \quad \text{(for the $2$nd alternative)} \label{eq2} \\
(k + 1) y_{C_1} - y_2 - k y_{C_2} & = & k - 1; \quad \text{(for alternatives in $C_1$)} \label{eq3} \\
2k y_{C_i} - k y_{C_{i-1}} - k y_{C_{i+1}} & = & 0; \quad \text{(for alternatives in $C_i$, $i = 2,3, \dots m-1$)} \label{eq4} \\
(k + 1) y_{C_m} - y_1 - k y_{C_{m-1}} & = & -(k + 1). \quad \text{(for alternatives in $C_m$)} \label{eq5}
\end{eqnarray}
For instance, the $2$nd alternative has $k+1$ known comparisons: with the first, and with all $k$ alternatives of $C_1$,
and the $2$nd is preferred in the latter $k$ of these.
The derivation can be found in \citet{BozokiFulopRonyai2010}.

Subtract \eqref{eq2} from \eqref{eq1} in order to get
\begin{eqnarray}
(k + 2) \left( y_1 - y_2 \right) - k \left( y_{C_m} - y_{C_1} \right) & = & 2. \label{eq6} 
\end{eqnarray}
The difference of equations \eqref{eq3} and \eqref{eq5} gives,
\begin{eqnarray}
\left( y_{C_1} - y_{C_m} \right) - \left( y_2 - y_1 \right) - k \left( y_{C_2} - y_{C_1} \right) - k \left( y_{C_m} - y_{C_{m-1}} \right) & = & 2k. \label{eq7}
\end{eqnarray}
It follows from equations \eqref{eq4} that
\begin{equation}
y_{C_2} - y_{C_1} = y_{C_3} - y_{C_2} = \cdots = y_{C_m} - y_{C_{m-1}} = \frac{y_{C_m} - y_{C_1}}{m-1}. \label{eq8}
\end{equation}
Equations \eqref{eq7} and \eqref{eq8} lead to
\begin{eqnarray*}
\left( y_{C_1} - y_{C_m} \right) - \left( y_2 - y_1 \right) + \frac{2k}{m-1} \left( y_{C_1} - y_{C_m} \right) & = & 2k,
\end{eqnarray*}
which results in
\begin{equation}
y_{C_1} - y_{C_m} = \frac{m-1}{2k + m -1} \left[ 2k + \left( y_2 - y_1 \right) \right]. \label{eq9}
\end{equation}
Substituting \eqref{eq9} into \eqref{eq6} gives
\begin{equation*}
(k + 2) \left( y_1 - y_2 \right) = 2 - k \left( y_{C_1} - y_{C_m} \right) = 2- \frac{k(m-1)}{2k + m -1} \left[ 2k + \left( y_2 - y_1 \right) \right].
\end{equation*}
After some calculation we infer
\begin{eqnarray*}
(k + 2)(2k + m -1) \left( y_1 - y_2 \right) & = & 4k + 2m - 2 - 2k^2(m-1) - k(m-1) \left( y_2 - y_1 \right); \\
\left( 2k^2 + 4k + 2m - 2 \right) \left( y_1 - y_2 \right) & = & -2k^2(m-1) + 4k + 2m - 2.
\end{eqnarray*}
It means that
\begin{equation*}
y_1 - y_2 = \frac{-k^2(m-1) + 2k + m - 1}{k^2 + 2k + m - 1},
\end{equation*}
so $\lim_{m \to \infty} \left( y_1 - y_2 \right) = -\infty$ for any $k \geq 2$.
Hence $w_2^{LLSM} - w_1^{LLSM}$ can be arbitrarily large independent of $b$.\footnote{~It is trivial if $b$ can vary as $\log b$ may also be arbitrarily large.}
\end{proof}

\begin{remark} \label{Rem3}
In Example~\ref{Examp4}, $w_1^{LLSM} - w_2^{LLSM} > 0$ if $k = 2$ and $m = 2$ (so there are $6$ alternatives), but $w_1^{LLSM} - w_2^{LLSM} < 0$ if $k = 3$ and $m = 2$ or $k = 2$ and $m = 3$ (so there are $8$ alternatives with $13$ and $16$ known comparisons, respectively). This family of directed acyclic graphs does not give an example with $7$ alternatives as Example~\ref{Examp2} does.
\end{remark}

\section{Linear order preservation and the Eigenvector \\ Method} \label{Sec4}

In this section we examine whether $EM$ satisfies the property $LOP$ or not.

\begin{proposition} \label{Prop2}
Let $\mathbf{A} = \left[ a_{ij} \right]$ be an ordinal pairwise comparison matrix.
Vector $\mathbf{w}^{EM}(\mathbf{A})$ is unique if and only if the directed graph associated to $\mathbf{A}$ is weakly connected.
\end{proposition}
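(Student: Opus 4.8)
The plan is to treat $\mathbf{w}^{EM}(\mathbf{A})$ as the Perron eigenvector of an optimal completion and to reduce the claim to a statement about the minimizers of the eigenvalue optimization problem $\min_{\mathbf{x}>0}\lambda_{\max}(\mathbf{A}(\mathbf{x}))$ of \citet{ShiraishiObataDaigo1998}, where $\mathbf{A}(\mathbf{x})$ is the complete positive reciprocal matrix obtained by writing a variable $x_{ij}>0$ (and its reciprocal) in each missing position. For every fixed $\mathbf{x}>0$ the matrix $\mathbf{A}(\mathbf{x})$ is strictly positive, so by the Perron theorem its dominant eigenvalue is simple and its positive eigenvector is unique up to a positive scalar. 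Hence non-uniqueness of $\mathbf{w}^{EM}(\mathbf{A})$ can arise only from optimal completions $\mathbf{x}^\ast$ producing different eigenvector rays, and the proposition becomes a question about the optimizer set of this program. I would prove the two implications separately.

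For the necessity direction I would argue the contrapositive by exploiting a scaling symmetry. Suppose the associated graph falls into connected components $V_1,\dots,V_c$ with $c\geq 2$. Because the components are the maximal weakly connected pieces, no known comparison joins two of them, so every entry of $\mathbf{A}$ between distinct components is missing. Fix positive numbers $t_1,\dots,t_c$, not all equal, and let $\mathbf{D}=\operatorname{diag}(d_i)$ with $d_i=t_s$ whenever $i\in V_s$. The similarity $\mathbf{D}\,\mathbf{A}(\mathbf{x})\,\mathbf{D}^{-1}$ leaves the diagonal and every within-component (hence every known) entry unchanged, multiplies each free cross-component variable by a positive factor, and preserves reciprocity; thus it is again an admissible completion $\mathbf{A}(\mathbf{x}')$ with the same spectrum, in particular the same $\lambda_{\max}$. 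Consequently it carries any optimal completion to an optimal completion while sending the Perron eigenvector $\mathbf{w}$ to $\mathbf{D}\mathbf{w}$. Since $\mathbf{D}$ is not a scalar multiple of the identity, $\mathbf{D}\mathbf{w}$ spans a different ray, which exhibits a second $EM$ weight vector and refutes uniqueness.

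For the sufficiency direction I would invoke the convex-analytic machinery of \citet{BozokiFulopRonyai2010}, the $EM$ counterpart of the result used for Proposition~\ref{Prop1}: after the logarithmic change of variables the objective $\log\lambda_{\max}(\mathbf{A}(\mathbf{x}))$ is convex, and weak connectedness supplies the coercivity and the strict convexity (transverse to the global scaling direction) that make the minimizer exist and be unique up to multiplying the whole weight vector by a positive scalar. This matches the symmetry picture above: for a single component the only block scalings $\mathbf{D}$ are the global scalars $t\mathbf{I}$, so no competing ray can appear, whereas $c\geq 2$ is exactly the situation that creates the extra $(c-1)$-parameter family of admissible scalings.

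The step I expect to be the main obstacle is the sufficiency part, namely proving that the minimizer of $\lambda_{\max}(\mathbf{A}(\mathbf{x}))$ exists and is unique up to global scaling precisely under weak connectedness; this rests on the log-convexity of the Perron root together with a coercivity argument tied to connectedness, and is the substantive content already established in \citet{BozokiFulopRonyai2010}, which I would cite rather than reprove. The necessity direction, by contrast, is elementary once the block-diagonal similarity symmetry is noticed.
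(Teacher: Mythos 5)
Your proposal is correct, but it does strictly more than the paper, whose entire proof of Proposition~\ref{Prop2} is the citation ``See \citet[Theorem 2]{BozokiFulopRonyai2010}.'' You rediscover the substance of that cited theorem: the reduction of $\mathbf{w}^{EM}$ to the Perron vector of an optimal completion in the Shiraishi--Obata--Daigo program $\min_{\mathbf{x}>0}\lambda_{\max}(\mathbf{A}(\mathbf{x}))$ is exactly the framework the paper adopts (see its Section~\ref{Theory}), and your necessity argument via the block-diagonal similarity $\mathbf{D}\mathbf{A}(\mathbf{x})\mathbf{D}^{-1}$ is sound: with $d_i$ constant on each weakly connected component, reciprocity and all known entries are preserved, the spectrum (hence optimality) is unchanged, and $\mathbf{D}\mathbf{w}$ with non-scalar $\mathbf{D}$ and $\mathbf{w}>0$ is a genuinely different ray, so uniqueness fails when $c\geq 2$. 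This scaling-symmetry idea is essentially how the non-uniqueness half is handled in the cited source as well, so your route is a partial reproof rather than a different method; what it buys is that the easy direction becomes self-contained and transparent, at the cost of length, while the paper buys brevity by outsourcing both directions. For the sufficiency direction you correctly identify the substantive content (log-convexity of the Perron root in the exponential parametrization, plus coercivity/strict convexity transverse to global scaling under weak connectedness) and, like the paper, delegate it to \citet{BozokiFulopRonyai2010}; since that is the one genuinely hard step, your proof and the paper's ultimately rest on the same external result. One small point worth making explicit if you keep the necessity argument: you implicitly assume an optimal completion exists, and if the infimum were not attained then $\mathbf{w}^{EM}(\mathbf{A})$ would fail to be well-defined anyway, so uniqueness fails in either case; a sentence to that effect closes the argument cleanly.
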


\begin{proof}
See \citet[Theorem 2]{BozokiFulopRonyai2010}.
\end{proof}

\begin{theorem} \label{Theo3}
$EM$ may violate $LOP$.
\end{theorem}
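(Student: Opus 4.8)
The plan is to prove Theorem~\ref{Theo3} in the same spirit as Theorem~\ref{Theo1}: by exhibiting an explicit counterexample. Concretely, I would produce an ordinal pairwise comparison matrix $\mathbf{A}$ (equivalently, a directed acyclic graph) that admits a linear order of the alternatives—so that, after the normalizing permutation, every known entry above the diagonal equals $b$—compute the Eigenvector Method weight vector $\mathbf{w}^{EM}(\mathbf{A})$, and verify that for some known edge $i \to j$ (hence $i < j$ and $a_{ij} = b$) one has $w_i^{EM}(\mathbf{A}) < w_j^{EM}(\mathbf{A})$. Since ``may violate'' is an existential claim, a single such instance suffices. By Proposition~\ref{Prop2} it is enough to take a weakly connected graph, so that $\mathbf{w}^{EM}(\mathbf{A})$ is well defined (unique up to positive scaling). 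The seven-vertex graph of Example~\ref{Examp2}, which already defeats $LLSM$, is a natural first candidate to test, though a fresh small graph chosen for computational tractability may work better.

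The main obstacle, absent in the $LLSM$ case, is that there is no closed-form linear system for the $EM$ weights of an incomplete matrix. Following \citep{ShiraishiObataDaigo1998, ShiraishiObata2002, BozokiFulopRonyai2010}, I would obtain $\mathbf{w}^{EM}(\mathbf{A})$ in two stages: first solve the eigenvalue-optimization problem $\min_{\mathbf{x} > 0} \lambda_{\max}(\mathbf{A}(\mathbf{x}))$ to locate the optimal completion $\mathbf{A}(\mathbf{x}^\ast)$, and then take the Perron eigenvector of $\mathbf{A}(\mathbf{x}^\ast)$. To keep this manageable I would exploit any automorphisms of the chosen graph: vertices interchangeable by a symmetry must receive equal $EM$ weight, and the associated missing entries can be forced equal, collapsing the optimization to a few variables and potentially to a low-degree scalar equation. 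Where symmetry is insufficient, the optimization and the eigenvector computation can be carried out numerically to high precision for a fixed $b$, and the violating inequality checked directly.

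The second, more delicate point is that—unlike $LLSM$, whose ranking is invariant under the element-wise power $\mathbf{A} \mapsto \mathbf{A}^{(h)}$ by Corollary~\ref{Col1}—the $EM$ ranking need not be invariant in $b$. Consequently I cannot argue ``for every $b$'': the example must pin down a particular value of $b > 1$ (or an explicit range) for which the violation occurs, and the statement is naturally phrased for that fixed $b$. Because $\lambda_{\max}$ is a simple Perron eigenvalue of a positive matrix, both the optimal completion $\mathbf{x}^\ast$ and the corresponding eigenvector depend continuously on the entries, so a strict violation established at one value of $b$ persists on an open interval around it, which makes the counterexample robust rather than knife-edge. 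This intrinsic $b$-dependence is not merely a technical nuisance: it is precisely the mechanism behind the stronger phenomenon announced in the abstract, namely that the $EM$ ranking depends on the incomplete representation chosen, and the same worked example can be reused to exhibit two values of $b$ that yield genuinely different rankings.
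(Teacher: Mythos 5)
Your proposal is correct and takes essentially the same approach as the paper: the paper proves Theorem~\ref{Theo3} by exactly such an explicit counterexample, using the eight-vertex graph of Figure~\ref{Fig3} with the fixed value $b=3$ (whose automorphisms force the paired weights $w_3 = w_4$, $w_5 = w_6$, $w_7 = w_8$, as you anticipated), computing $\mathbf{w}^{EM}(\mathbf{A})$ via the optimal completion and verifying $w_1^{EM}(\mathbf{A}) < w_2^{EM}(\mathbf{A})$. Your observation about the intrinsic $b$-dependence is also borne out: the paper reuses the very same graph with $b=4$ in Proposition~\ref{Prop3} to show that the $EM$ ranking depends on the representation chosen.
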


\begin{proof}
Consider the directed acyclic graph on Figure~\ref{Fig3}. An associated ordinal pairwise comparison matrix $\mathbf{A}$ is as follows:
\[
\mathbf{A}=
\left(
\begin{array}{cccc cccc}
    1     & 3     & \ast  & \ast  & \ast  & \ast  & 3     & 3     \\
     1/3  & 1     & 3     & 3     & \ast  & \ast  & \ast  & \ast \\
    \ast  &  1/3  & 1     & \ast  & 3     & 3     & \ast  & \ast \\
    \ast  &  1/3  & \ast  & 1     & 3     & 3     & \ast  & \ast \\
    \ast  & \ast  &  1/3  &  1/3  & 1     & \ast  & 3     & 3     \\
    \ast  & \ast  &  1/3  &  1/3  & \ast  & 1     & 3     & 3     \\
     1/3  & \ast  & \ast  & \ast  &  1/3  &  1/3  & 1     & \ast \\
     1/3  & \ast  & \ast  & \ast  &  1/3  &  1/3  & \ast  & 1     \\
\end{array}
\right).
\]

Property $LOP$ is satisfied if $w_1(\mathbf{A}) \geq w_2(\mathbf{A})$, $w_1(\mathbf{A}) \geq w_7(\mathbf{A})$, $w_1(\mathbf{A}) \geq w_8(\mathbf{A})$, $w_2(\mathbf{A}) \geq w_3(\mathbf{A})$, $w_2(\mathbf{A}) \geq w_4(\mathbf{A})$ as well as $w_i(\mathbf{A}) \geq w_j(\mathbf{A})$ for all $i = 3,4$ and $j = 5,6$; $i = 5,6$ and $j = 7,8$.

However, $EM$ results in
\[
\mathbf{w}^{EM}(\mathbf{A}) = \left[
\begin{array}{cccc cccc}
    0.2404 & 0.2442 & 0.1481 & 0.1481 & 0.0729 & 0.0729 & 0.0367 & 0.0367 \\
\end{array}
\right]^\top,
\]
that is, $w_1^{EM}(\mathbf{A}) < w_2^{EM}(\mathbf{A})$, in contradiction with preservation of linear order.
\end{proof}

A parallel of Corollary~\ref{Col1} is not true in the case of $EM$, it may give a different ranking for a certain ordinal pairwise comparison matrix corresponding to the same directed acyclic graph.

\begin{proposition} \label{Prop3}
Let $\mathbf{A} = \left[ a_{ij} \right]$ be an ordinal pairwise comparison matrix representing a directed acyclic graph.
The ranking of the alternatives according to $EM$ depends on the value of $b>1$.
%, that is, $w^{EM}_i (\mathbf{A}) \geq w^{EM}_j (\mathbf{A})$ and $w^{EM}_i (\mathbf{A}^{(h)}) \geq w^{EM}_j (\mathbf{A}^{(h)})$ may occur for some $i,j = 1,2, \dots ,n$ and $h>0$, where $\mathbf{A}^{(h)} = \left[ a_{ij}^h \right]$.
\end{proposition}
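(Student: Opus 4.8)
The plan is to establish the statement by an explicit construction, since it suffices to exhibit a single directed acyclic graph together with two admissible values $b_1 \neq b_2$ for which $EM$ returns different rankings. The contrast with Proposition~\ref{Prop1} and Corollary~\ref{Col1} is the guiding idea: $LLSM$ is invariant under $b \mapsto b^h$ only because its log-weights scale linearly (so $\mathbf{A}^{(h)}$ merely rescales $\mathbf{y}$), whereas the Perron eigenvector of $\mathbf{A}(b)$ — equivalently, of the optimally completed matrix, which coincides with Harker's auxiliary matrix $\mathbf{B}(b)$ — is a genuinely nonlinear function of $b$. I would therefore produce a graph whose $EM$ ranking at large $b$ differs from its ranking at $b$ close to $1$, and verify the reversal of $w_i^{EM}(\mathbf{A}) - w_j^{EM}(\mathbf{A})$ for a suitable pair.

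To choose the example well I would first pin down the two limiting regimes. Writing $b = e^{\epsilon}$ and expanding $\mathbf{B}(b) = \mathbf{B}_0 + \epsilon \mathbf{B}_1 + O(\epsilon^2)$, one checks that $\mathbf{B}_0 = n\mathbf{I} - \mathbf{L}$, where $\mathbf{L}$ is the Laplacian of the comparison graph; for a weakly connected graph its Perron eigenvalue is $n$ with eigenvector $\mathbf{1}$, so at $b=1$ all weights coincide. The skew-symmetric first-order term $\mathbf{B}_1 = \mathbf{S}$ (with $s_{ij} = +1$ for $i \to j$ and $-1$ for $j \to i$) satisfies $\mathbf{S}\mathbf{1} = \mathbf{g}$, the vector of net out-degrees, and the nondegenerate eigenvector perturbation gives $\mathbf{w}^{EM} \approx \mathbf{1} + \epsilon\, \mathbf{L}^{+}\mathbf{g}$. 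Since the $LLSM$ log-weights solve $\mathbf{L}\mathbf{y} = \epsilon \mathbf{g}$, this is exactly $\mathbf{y}$ to leading order: as $b \to 1^{+}$ the $EM$ ranking equals the $LLSM$ ranking. Hence dependence on $b$ must come from the large-$b$ end, where a tropical/longest-path analysis of $\mathbf{B}(b)$ dominates and can favour source-like vertices with long downstream chains, reordering a pair relative to the $LLSM$ (i.e.\ small-$b$) order.

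For the execution I would target a graph on which these two extremes disagree for some pair $(i,j)$ and then make this rigorous at both ends — first-order perturbation as $b \to 1^{+}$ and dominant balance as $b \to \infty$ — so that continuity of the simple Perron eigenvector in $b$ forces a crossing of $w_i^{EM}(\mathbf{A})$ and $w_j^{EM}(\mathbf{A})$, establishing $b$-dependence. The more economical route is simply to evaluate $\mathbf{w}^{EM}(\mathbf{A})$ at two concrete values of $b$ and exhibit a sign change of $w_i^{EM}(\mathbf{A}) - w_j^{EM}(\mathbf{A})$, as is already done numerically for $b=3$ in the proof of Theorem~\ref{Theo3}. The main obstacle is the absence of a closed form for the Perron eigenvector at these sizes: the small-$b$ end is controlled cleanly by the perturbation above, but the large-$b$ asymptotics are delicate because the Harker diagonal entries $n - d_i$ enter the leading exponents and must be tracked carefully; relying instead on two explicit (verified) evaluations sidesteps the asymptotics but shifts the burden to certifying the computed strict inequality.
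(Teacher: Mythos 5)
Your ``economical route'' is exactly the paper's proof: it reuses the directed acyclic graph of Figure~\ref{Fig3}, keeps the $b=3$ computation from Theorem~\ref{Theo3} (where $w_1^{EM}(\mathbf{A}) < w_2^{EM}(\mathbf{A})$), and simply evaluates the same graph at a second value, $b=4$, obtaining $w_1^{EM}(\mathbf{A}') > w_2^{EM}(\mathbf{A}')$, so the ranking of alternatives $1$ and $2$ reverses. Your perturbative sketch (with Harker's matrix $\mathbf{B}_0 = n\mathbf{I} - \mathbf{L}$, so that the $EM$ ranking agrees with the $LLSM$ ranking to first order as $b \to 1^{+}$) is sound and consistent with the example --- Remark~\ref{Rem3} gives $w_1^{LLSM} < w_2^{LLSM}$ on this graph --- but the paper needs none of that asymptotic machinery.
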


\begin{proof}
Consider the directed acyclic graph on Figure~\ref{Fig3}. Besides $\mathbf{A}$, another representation by ordinal pairwise comparison matrix $\mathbf{A}'$ is as follows:
\[
\mathbf{A}'=
\left(
\begin{array}{cccc cccc}
    1     & 4     & \ast  & \ast  & \ast  & \ast  & 4     & 4     \\
     1/4  & 1     & 4     & 4     & \ast  & \ast  & \ast  & \ast \\
    \ast  &  1/4  & 1     & \ast  & 4     & 4     & \ast  & \ast \\
    \ast  &  1/4  & \ast  & 1     & 4     & 4     & \ast  & \ast \\
    \ast  & \ast  &  1/4  &  1/4  & 1     & \ast  & 4     & 4     \\
    \ast  & \ast  &  1/4  &  1/4  & \ast  & 1     & 4     & 4     \\
     1/4  & \ast  & \ast  & \ast  &  1/4  &  1/4  & 1     & \ast \\
     1/4  & \ast  & \ast  & \ast  &  1/4  &  1/4  & \ast  & 1     \\
\end{array}
\right).
\]

$EM$ gives
\[
\mathbf{w}^{EM}(\mathbf{A}') = \left[
\begin{array}{cccc cccc}
    0.2828 & 0.2656 & 0.1404 & 0.1404 & 0.0594 & 0.0594 & 0.0260 & 0.0260 \\
\end{array}
\right]^\top,
\]
thus $w_1^{EM}(\mathbf{A}) < w_2^{EM}(\mathbf{A})$ but $w_1^{EM}(\mathbf{A}') > w_2^{EM}(\mathbf{A}')$, which verifies Proposition~\ref{Prop3}.
\end{proof}

Note that Proposition~\ref{Prop3} does not necessarily mean the violation of linear order preservation if the $a_{ij}$ is missing. For instance, the relative ranking of alternatives $6$ and $7$ can be arbitrary in Example~\ref{Examp2}.

\begin{remark} \label{Rem4}
There exists an example with $6$ alternatives and $5$ known comparisons (which is minimal provided weak connectedness) demonstrating Proposition~\ref{Prop3}. It is the smallest in the number of alternatives.
\end{remark}

The sensitivity of the ranking of the alternatives by $EM$ to the choice of $b>1$ was observed by \citet{GenestLapointeDrury1993} for certain complete pairwise comparison matrices.

\section{Conclusion} \label{Sec5}

Logarithmic Least Squares Method seems to give a counter-intuitive ranking of the alternatives for some incomplete pairwise comparison matrices representing preferences described by a directed acyclic graph. The ranking according to the Eigenvector Method  may also contradict to the natural ranking order, while it depends on the correspondence chosen for these preferences, too.

Our results open at least three topics for future research:
\begin{enumerate}
\item
How can one characterize the set of ordinal pairwise comparison matrices with a linear order of the alternatives for which $LLSM$ obeys $LOP$?\footnote{~One may realize that all counterexamples have a certain structure, visible on the family of directed acyclic graphs according to Example~\ref{Examp4} the first alternative is preferred to the second and some with a low rank, while the second is preferred to some others with a high rank.}
\item
When does an unambiguous ranking of the alternatives according to $EM$ exist (on the class of preferences given by a directed acyclic graph)?
\item
Which weighting methods perform well with respect to the condition $LOP$?
\end{enumerate}

%\bibliographystyle{apalike}
%\bibliography{All_references}

\end{document}